\begin{document}

\title{A note on the total number of cycles of even and odd permutations}

\author[jskim]{Jang Soo Kim}
\email{jskim@kaist.ac.kr}
\thanks{The author is supported by the grant ANR08-JCJC-0011.}

\begin{abstract}
  We prove bijectively that the total number of cycles of all even
  permutations of $[n]=\{1,2,\ldots,n\}$ and the total number of cycles of
  all odd permutations of $[n]$ differ by $(-1)^n(n-2)!$, which was stated
  as an open problem by Mikl\'{o}s B\'{o}na.  We also prove bijectively the
  following more general identity:
$$\sum_{i=1}^n c(n,i)\cdot i \cdot (-k)^{i-1} = (-1)^k k! (n-k-1)!,$$
where $c(n,i)$ denotes the number of permutations of $[n]$ with $i$ cycles. 
\end{abstract}

\maketitle

%-------- theorems ------------------------------------------
\newtheorem{thm}{Theorem}
\theoremstyle{definition}
\newtheorem{example}{Example}
%-------- macros ------------------------------------------
\newcommand\sgn{\operatorname{sign}}
\newcommand\Fix{\operatorname{Fix}}

\section{Introduction}
\label{sec:introduction}

Let $c(n,i)$ denote the number of permutations of $[n]=\{1,2,\ldots,n\}$ with $i$ cycles. 
The following equation is well known; for example see \cite{Bona,Stanley1997}:
\begin{equation}
  \label{eq:2}
\sum_{i=1}^n c(n,i) x^i = x(x+1) \cdots (x+n-1).
\end{equation}

Let $n$ and $k$ be positive integers with $k<n$. 
By differentiating \eqref{eq:2} with respect to $x$ and substituting $x=-k$, we get the following:
\begin{equation}
  \label{eq:4}
\sum_{i=1}^n c(n,i)\cdot i \cdot (-k)^{i-1} = (-1)^k k! (n-k-1)!.
\end{equation}
In particular, if $k=1$, then \eqref{eq:4} implies the following theorem.

\begin{thm}\label{thm:1}
The total number of cycles of all even permutations of $[n]$ and the 
total number of cycles of all odd permutations of $[n]$ differ by $(-1)^n(n-2)!$.  
\end{thm}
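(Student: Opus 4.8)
The plan is to deduce Theorem~\ref{thm:1} directly from the specialization $k=1$ of equation~\eqref{eq:4}, which I take as given here. The one conceptual ingredient I need is the standard relationship between the cycle structure and the parity of a permutation: a permutation $\sigma$ of $[n]$ having exactly $i$ cycles can be written as a product of $n-i$ transpositions, because each $j$-cycle is a product of $j-1$ transpositions and the cycle lengths sum to $n$. Hence $\sgn(\sigma)=(-1)^{n-i}$, so $\sigma$ is even precisely when $i\equiv n\pmod 2$ and odd precisely when $i\not\equiv n\pmod 2$.

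Writing $E$ for the total number of cycles among all even permutations of $[n]$ and $O$ for the corresponding total among the odd permutations, I would next observe that weighting each permutation by its sign converts the difference $E-O$ into a single signed sum over all permutations of $[n]$. Grouping these permutations according to their number of cycles and using $c(n,i)$ to count those with $i$ cycles, each permutation with $i$ cycles contributes its $i$ cycles with sign $(-1)^{n-i}$, so that
\begin{equation*}
  E-O=\sum_{i=1}^n c(n,i)\cdot i\cdot(-1)^{n-i}.
\end{equation*}

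The final step is a short sign manipulation to match the left-hand side of \eqref{eq:4}. Factoring out the constant $(-1)^n$ and using $(-1)^{-i}=(-1)^i=-(-1)^{i-1}$, I would rewrite the displayed sum as $-(-1)^n\sum_{i=1}^n c(n,i)\cdot i\cdot(-1)^{i-1}$. Applying \eqref{eq:4} with $k=1$, the inner sum equals $(-1)^1\,1!\,(n-2)!=-(n-2)!$, and therefore $E-O=(-1)^n(n-2)!$, as claimed.

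I do not expect a genuine obstacle in this deduction: given \eqref{eq:4}, the only thing to watch is the bookkeeping of signs, and the sole nontrivial input is the parity formula $\sgn(\sigma)=(-1)^{n-i}$. The real difficulty lies entirely in establishing \eqref{eq:4}, and in particular its promised bijective proof, rather than in passing from \eqref{eq:4} to Theorem~\ref{thm:1}.
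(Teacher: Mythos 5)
Your deduction is correct: the sign bookkeeping checks out, and with $\sgn(\sigma)=(-1)^{n-i}$ for a permutation with $i$ cycles you do get $E-O=-(-1)^n\sum_i c(n,i)\,i\,(-1)^{i-1}=(-1)^n(n-2)!$ from \eqref{eq:4} at $k=1$. However, this is a genuinely different route from the paper's proof of Theorem~\ref{thm:1} --- in fact it is exactly the derivation the paper itself sketches in the introduction (where \eqref{eq:4} is obtained by differentiating the standard identity \eqref{eq:2} and setting $x=-k$), and it is precisely the proof that the paper regards as already known. The open problem posed by B\'ona, and the actual content of the paper's proof, is a \emph{bijective} argument: the paper encodes the quantity $E-O$ as a signed sum over pairs $(\pi,C)$ with $C$ a cycle of $\pi$, and constructs a sign-reversing involution $\phi$ (multiplying by a transposition $\tau_{ij}$ chosen from outside $C$ when possible, and otherwise splitting or merging the element $1$ into or out of a long cycle) whose $(n-2)!$ fixed points all have sign $(-1)^n$. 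Your approach buys brevity at the cost of leaning on calculus applied to \eqref{eq:2}; the paper's approach is longer but answers the question that was actually asked, namely exhibiting an explicit cancellation that explains the identity combinatorially. If you intend your write-up as a proof of the theorem as a standalone fact, it is fine; if the goal is the bijective proof, the real work still lies ahead of you, as you acknowledge in your last paragraph.
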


The problem of finding a bijective proof of Theorem~\ref{thm:1} was proposed by Mikl\'{o}s B\'{o}na and it has been added to \cite{StanleyEC1se} as an exercise (private communication with Richard Stanley and Mikl\'{o}s B\'{o}na).
In this note, we prove Theorem~\ref{thm:1} bijectively by finding a sign-reversing involution. We also prove \eqref{eq:4} bijectively.

\section{Bijective proofs}
\label{sec:bijective-proofs}

Recall the lexicographic order on the pairs of integers, that is,  $(i_1,j_1) \leq  (i_2,j_2)$ if and only if $i_1 < i_2$, or $i_1=i_2$ and $j_1\leq j_2$. Note that this is a linear order.

Let $T(n)$ denote the set of pairs $(\pi,C)$ where $\pi$ is a permutation of $[n]$ and $C$ is a cycle of $\pi$. Then Theorem~\ref{thm:1} is equivalent to the following:
\begin{equation}
  \label{eq:1}
\sum_{(\pi,C)\in T(n)} \sgn(\pi) =(-1)^n(n-2)!.
\end{equation}

\begin{proof}[Proof of Theorem~\ref{thm:1}]
We define a map $\phi:T(n)\rightarrow T(n)$ as follows. Let $(\pi,C)\in T(n)$. 

\textbf{\textsc{Case 1:}} $C$ contains at most $n-2$ integers. Let $(i,j)$ be the smallest pair in lexicographic order for distinct integers $i$ and $j$ which are not contained in $C$. 
Then we define $\phi(\pi,C)=(\tau_{ij}\pi,C)$, where $\tau_{ij}$ is the transposition exchanging $i$ and $j$. 

\textbf{\textsc{Case 2:}} $C$ contains at least $n-1$ integers. If $C$ does not contain $1$, then we define $\phi(\pi,C)=(\pi,C)$. If $C$ contains $1$, then we have either $\pi=(a_0)(1,a_1,a_2,\ldots,a_{n-2})$ or $\pi=(1,a_0,a_1,\ldots,a_{n-2})$ in cycle notation for some integers $a_i$. Let $\pi'=(1,a_0,a_1,\ldots,a_{n-2})$ if $\pi=(a_0)(1,a_1,a_2,\ldots,a_{n-2})$, and $\pi'=(a_0)(1,a_1,a_2,\ldots,a_{n-2})$ if $\pi=(1,a_0,a_1,\ldots,a_{n-2})$.
We define $\phi(\pi,C)=(\pi',C')$, where $C'$ is the cycle of $\pi'$ containing $1$.

Let us define the \emph{sign} of $(\pi,C)\in T(n)$ to be $\sgn(\pi)$.  It
is easy to see that $\phi$ is a sign-reversing involution on $T(n)$ whose
fixed points are precisely those $(\pi,C)\in T(n)$ such that $1$ forms a
$1$-cycle and the rest of the integers form an $(n-1)$-cycle, which is $C$. Since
there are $(n-2)!$ such fixed points of $\phi$ which all have sign
$(-1)^n$, we get \eqref{eq:1}, and thus Theorem~\ref{thm:1}.
\end{proof}

Now we will generalize this argument to prove \eqref{eq:4}.

Let $P(n,k)$ denote the set of triples $(\pi,C,f)$ where $\pi$ is a permutation of $[n]$, $C$ is a cycle of $\pi$ and $f$ is a function from the set of cycles of $\pi$ except $C$ to $[k]$.
The left-hand side of \eqref{eq:4} is equal to
\begin{align*}
\sum_{(\pi,C)\in T(n)} (-k)^{cyc(\pi)-1} &= \sum_{(\pi,C)\in T(n)} (-1)^{cyc(\pi)-1} k^{cyc(\pi)-1}\\
&=  (-1)^{n-1}\sum_{(\pi,C,f)\in P(n,k)} \sgn(\pi),  
\end{align*}
because $\sgn(\pi)=(-1)^{n-cyc(\pi)}$ and for given $(\pi,C)\in T(n)$,
there are $k^{cyc(\pi)-1}$ choices of $f$ with $(\pi,C,f)\in P(n,k)$. 
Thus we get that \eqref{eq:4} is equivalent to the following:
\begin{equation}
  \label{eq:5}
\sum_{(\pi,C,f)\in P(n,k)} \sgn(\pi) = (-1)^{n-k-1} k! (n-k-1)!.
\end{equation}

Let us define the \emph{sign} of $(\pi,C,f)\in P(n,k)$ to be $\sgn(\pi)$.  Let
$\Fix(n,k)$ denote the set of elements $(\pi,C,f)\in P(n,k)$ such that (1)
each integer $i\in[k]$ forms a 1-cycle of $\pi$ and the integers
$k+1,k+2,\ldots,n$ form an $(n-k)$-cycle of $\pi$, which is $C$ and (2) the
$f$ values of the cycles of $\pi$ except $C$ are all distinct.  Then, to
prove \eqref{eq:5}, it is sufficient to find a sign-reversing involution on
$P(n,k)$ whose fixed point set is $\Fix(n,k)$.

We will define a map $\psi:P(n,k)\to P(n,k)$ as follows.
Let $(\pi,C,f)\in P(n,k)$. 

\textbf{\textsc{Case 1:}} There is a pair $(i,j)$ of integers $i<j$ such that $i\in C_1\ne C$ and $j\in C_2\ne C$ with $f(C_1)=f(C_2)$. Here we may have $C_1=C_2$. Let $(i,j)$ be the smallest such pair in lexicographic order. Then we define $\psi(\pi,C,f) = (\tau_{ij}\pi, C, f')$, where $f'(C')=f(C')$ if $i,j\not\in C'$, and $f'(C')=f(C_1)$ otherwise. As before, $\tau_{ij}$ is the transposition exchanging $i$ and $j$.

\textbf{\textsc{Case 2:}} Case 1 does not hold. Then the cycles of $\pi$ except $C$ are all $1$-cycles whose $f$ values are all distinct. Thus there are at most $k$ $1$-cycles of $\pi$ except $C$.

We can represent $(\pi,C,f)$ as a digraph $D$ with vertex set $[n]$ as
follows.  For each integer $i$ contained in $C$, add an edge $i\to\pi(i)$.
For each integer $i$ of $[n]$ which is not contained in $C$, add an edge
$i\to f(i)$, where $f(i)$ is the $f$ value of the $1$-cycle $(i)$
consisting of $i$. For example, see Figures~\ref{fig:digraph} and
\ref{fig:digraph2}.  Note that we can recover $(\pi,C,f)$ from $D$ even
when $D$ consists of cycles only because in this case $C$ is the only cycle
containing integers greater than $k$.

\begin{figure}
  \centering
\begin{tikzpicture}[line width=1pt] 
\node [shift={(1.2,0)}] (1) at (72:1)   {$1$};
\node [shift={(-1,0)}] (8) at (144:1)   {$9$};
\node [shift={(2.4,0)}] (6) at (72:1)   {$11$};
\node (a) at (0:1)     {$3$}; 
\node (b) at (72:1)   {$2$}; 
\node (c) at (144:1) {$8$}; 
\node (d) at (216:1) {$10$}; 
\node (e) at (-72:1) {$5$}; 
\draw [bend left, ->] (b) to (a);
\draw [bend left, ->] (c) to (b);
\draw [bend left, ->] (d) to (c);
\draw [bend left, ->] (e) to (d);
\draw [bend left, ->] (a) to (e); 
\draw [->] (1) to (b); 
\draw [->] (6) to (1); 
\draw [->] (8) to (c); 
\node (3) at (3.5,0) {$4$};
\node (4) at (4.5,0) {$6$};
\node (5) at (5.5,0) {$7$};

\draw [bend left=60,->] (3) to (4);
\draw [bend left=60,->] (4) to (3);
\draw [loop above,->] (5) to (5);
\end{tikzpicture}
 \caption{The digraph representing $(\pi,C,f)\in P(11,8)$, where $\pi=(2,3,5,10,8)(1)(4)(6)(7)(9)(11)$, $C=(2,3,5,10,8)$, $f(1)=2$, $f(4)=6$, $f(6)=4$, $f(7)=7$, $f(9)=8$ and $f(11)=1$.}
  \label{fig:digraph}
\end{figure}
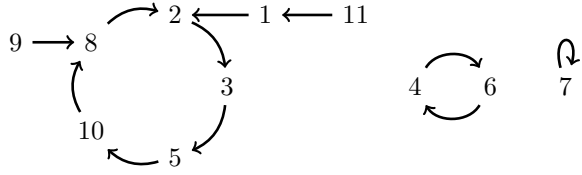

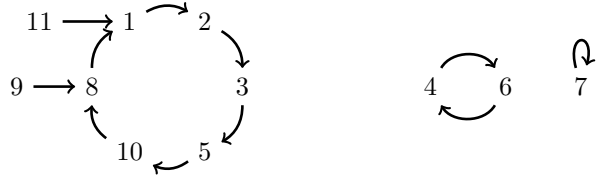
\begin{figure}
  \centering
\begin{tikzpicture}[line width=1pt]
\node (8) at (180:2)   {$9$};
\node [shift = {(-1.2,0)}] (6) at (120:1)   {$11$};
\node (3) at (0:1)     {$3$}; 
\node (2) at (60:1)   {$2$}; 
\node (1) at (120:1)   {$1$};
\node (5) at (180:1) {$8$}; 
\node (9) at (240:1) {$10$}; 
\node (4) at (300:1) {$5$}; 
\draw [bend left, ->] (1) to (2);
\draw [bend left, ->] (2) to (3);
\draw [bend left, ->] (3) to (4);
\draw [bend left, ->] (4) to (9);
\draw [bend left, ->] (9) to (5);
\draw [bend left, ->] (5) to (1); 
\draw [->] (6) to (1); 
\draw [->] (8) to (5); 

\node (x) at (3.5,0) {$4$};
\node (y) at (4.5,0) {$6$};
\node (z) at (5.5,0) {$7$};
\draw [bend left=60,->] (x) to (y);
\draw [bend left=60,->] (y) to (x);
\draw [loop above,->] (z) to (z);
\end{tikzpicture}
 \caption{The digraph representing $(\pi,C,f)\in P(11,8)$, where $\pi=(1,2,3,5,10,8) (4)(6)(7)(9)(11)$, $C=(1,2,3,5,10,8)$, $f(4)=6$, $f(6)=4$, $f(7)=7$, $f(9)=8$ and $f(11)=1$.}
  \label{fig:digraph2}
\end{figure}

Now we consider the two sub-cases where $C$ contains an integer in $[k]$ or not.

\textbf{\textsc{Sub-Case 2-a:}} $C$ does not contain any integer in $[k]$. It is easy to see that we have this sub-case if and only if $(\pi,C,f)\in \Fix(n,k)$. We define $\psi(\pi,C,f) = (\pi,C,f)$.

\textbf{\textsc{Sub-Case 2-b:}} $C$ contains an integer in $[k]$. Let $m$ be the smallest such integer. 

For an integer $i\in C$, we say that $i$ is \emph{free} if $i\in[k]$ and the in-degree of $i$ in $D$ is $1$, i.e.~there is no integer outside of $C$ pointing to $i$.
A sequence $(m_1,m_2,\ldots,m_\ell)$ of integers in $C$ is called a \emph{free chain} if it satisfies (1) for each $i\in[\ell]\setminus\{1\}$, $m_i$ is free and $m_i=\pi(m_{i-1})$, and (2) for each $i\in[\ell]$, $m_i$ is the $i$th-smallest integer in $C$.
Note that we always have a free chain, for example the sequence consisting of $m$ alone. Moreover, there is a unique maximal free chain. 

Let $(m_1,m_2,\ldots,m_\ell)$ be the maximal free chain. Let $\overline m = m_1$ if $\ell$ is odd, and $\overline m = m_2$ if $\ell$ is even. 

\begin{example}
The maximal free chains of the digraphs in Figures~\ref{fig:digraph} and \ref{fig:digraph2} are $(2,3,5)$ and $(1,2,3,5)$ respectively. Thus $\overline m =2$ in both Figures~\ref{fig:digraph} and \ref{fig:digraph2}.
\end{example}

Let $D'$ be the digraph obtained from $D$ by doing the following.  If
$\overline m$ is free, then let $u,v$ be the integers in $C$ such that $D$
has the edges $v\to u$ and $u\to \overline m$. It is not difficult to see
that in this case $C$ has at least two integers, which implies $u\ne
\overline m$. Then we remove the edge $v\to u$ and add an edge
$v\to\overline m$. If $\overline m$ is not free, then let $u$ and $v$ be
the integers with $u\not\in C$ and $v\in C$ such that $D$ has the edges
$u\to \overline m$ and $v\to \overline m$. Then we remove the edge $v\to
\overline m$ and add an edge $v\to u$.

We define $\psi(\pi,C,f)$ to be the element in $P(n,k)$ represented by $D'$.

\begin{example}
Let $(\pi,C,f)$ be represented by the digraph in Figure~\ref{fig:digraph}. Since $\overline m = 2$, $\psi(\pi,C,f)$ is represented by the digraph in Figure~\ref{fig:digraph2}. Note that $\psi(\psi(\pi,C,f))=(\pi,C,f)$.
\end{example}

It is easy to see that $\psi$ is a sign-reversing involution on $P(n,k)$ with fixed point set $\Fix(n,k)$. Thus we have proved \eqref{eq:4} bijectively.

\section*{Acknowledgement}
The author would like to thank the anonymous referee for reading the
manuscript carefully and making helpful comments.  He would also like
to thank Vincent Beck for pointing out a mathematical typo.

\bibliographystyle{elsarticle-num}

\begin{thebibliography}{1}

\bibitem{Bona}
Mikl\'{o}s B\'{o}na.
\newblock {\em Combinatorics of permutations}, volume~39.
\newblock ACM, New York, NY, USA, 2008.

\bibitem{StanleyEC1se}
Richard~P. Stanley.
\newblock {\em Enumerative Combinatorics. {V}ol. 1. second edition.}
\newblock in preparation, see \url{http://math.mit.edu/~rstan/ec/ch1.pdf}.

\bibitem{Stanley1997}
Richard~P. Stanley.
\newblock {\em Enumerative Combinatorics. {V}ol. 1}, volume~49 of {\em
  Cambridge Studies in Advanced Mathematics}.
\newblock Cambridge University Press, Cambridge, 1997.

\end{thebibliography}

\end{document}